\newtheorem{theorem}{Theorem}
\newtheorem{lemma}{Lemma}
\newtheorem{proposition}{Proposition}
\newcommand{\la}{\langle}
\newcommand{\ra}{\rangle}
\newcommand{\Ext}{\operatorname{Ext}}
\renewcommand{\H}{\mathcal H}
\newcommand{\Hp}{\mathcal H_\Pi}
\newcommand{\Ht}{\H^{(2)}}
\newcommand{\Am}{A \operatorname{-mod}}
\newcommand{\Pm}{\Pi \operatorname{-mod}}
\newcommand{\Hom}{\operatorname{Hom}}
\newcommand{\Tor}{\operatorname{Tor}}
\newcommand{\Fac}{\operatorname{Fac}}
\DeclareFontFamily{U}{wncy}{}
    \DeclareFontShape{U}{wncy}{m}{n}{<->wncyr10}{}
    \DeclareSymbolFont{mcy}{U}{wncy}{m}{n}
    \DeclareMathSymbol{\Sh}{\mathord}{mcy}{"58} 
\newcommand{\Sp}{\operatorname{Split}}
\newcommand{\KzeroR}{K_0^*(A)_{\mathbb R}}
\newcommand{\KzeroPR}{K_0^*(\Pi)_{\mathbb R}}
\title{Stability, shards, and preprojective algebras}
\author{Hugh Thomas}
\begin{document}
\maketitle
The goal of this note two-fold.  First, I would like to draw attention to the
way that stability gives us a geometrical picture of (some
of) the extension-closed abelian subcategories of a finite-dimensional
algebra.  Second, I will
describe Nathan Reading's \emph{shards} of
a hyperplane arrangement, and explain their relevance to understanding
the stability picture for 
finite-type preprojective algebras.  

\section{Semistable subcategories}

Let $A$ be a finite-dimensional algebra over a field $k$.  We will work
with the category of left $A$-modules.  Suppose that $A$ has $n$ pairwise
non-isomorphic simple modules $S_1,\dots,S_n$.

The Grothendieck group of $A$ can be defined as the free abelian group on
a set of generators corresponding to the simple modules:

$$K_0(A)=\bigoplus_{i=1}^n \mathbb Z[S_i].$$

For any $A$-module $M$, there is a corresponding class in $K_0(A)$, which
we denote $[M]$, and which is equal to $\sum_i c_i [S_i]$, where $c_i$ is the
number of times $S_i$ appears in a composition series for $M$.

We will be interested in linear functionals on $K_0(A)$.  For convenience
in drawing pictures, we will extend scalars to consider real-valued functionals.

$$\KzeroR= \Hom_{\mathbb Z} (K_0(A),\mathbb R)$$

Let $\phi \in \KzeroR$.  An $A$-module $M$ is called semistable with
respect to $\phi$ if $\phi([M])=0$ and $\phi([N])\leq 0$ for any 
submodule $N$ of $M$.
This definition is due to King \cite{Ki}, who showed that it is equivalent to a
notion of semistability coming from geometric invariant theory (which we
shall not need in this note).  There is also another reformulation in terms
of semi-invariants (see, for example, \cite{DW}),
but we shall not need to refer to this perspective either.  

We will write $(\Am)^\phi$ for the full subcategory of $A$-modules semistable
with respect to $\phi$.  It was shown by King (and it is an easy exercise)
that $(\Am)^\phi$ is an extension-closed, exact abelian subcategory of $\Am$.  


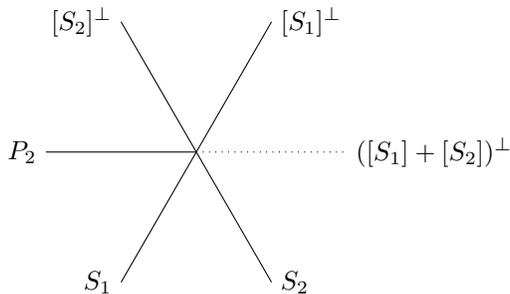
\begin{figure}[h]
$$\begin{tikzpicture}[scale=2]
  \draw [rotate=0,dotted] (0,0) -- (1,0) node[right]{$([S_1]+[S_2])^\perp$};
  \draw [rotate=60] (0,0)--(1,0) node[right]{$[S_1]^\perp$}; 
  \draw [rotate=120] (0,0)--(1,0) node[left]{$[S_2]^\perp$};
  \draw [rotate=180] (0,0)--(1,0) node[left]{$P_2$};
  \draw [rotate=240] (0,0)--(1,0) node[left]{$S_1$};
  \draw[rotate=300] (0,0)--(1,0) node[right]{$S_2$};
  \end{tikzpicture}$$
  \caption{\label{f-one}Regions of semistability for a path algebra of type $A_2$} 
\end{figure}

For example, we could consider a path algebra of
type $A_2$, with simples $S_1$ being projective and $S_2$ being injective,
as shown in Figure \ref{f-one}.  This is a picture of $\KzeroR\cong\mathbb R^2$.
Each point of the picture corresponds to a stability condition.    
There, $[S_1]^\perp$ designates the line consisting of elements of
$\KzeroR$ which vanish on $[S_1]$, and similarly for $[S_2]^\perp$ and
$([S_1]+[S_2])^\perp$.  We will consistently orient our stability diagrams so that the region where $\phi$ is positive on all the simples is at the bottom of the diagram.  
We have marked two lines and one half-line which
are regions of semistability for the indecomposable modules of this algebra.  $S_1$ is stable on the
whole line $[S_1]^\perp$, and similarly for $S_2$.  $P_2$, however, is
only stable on half of the line $([S_1]+[S_2])^\perp$, the half that is
drawn in solidly. On the other half, its submodule $S_1$ has $\phi([S_1])>0$,
which causes it to become unstable.  Note that the labels $S_1$, $S_2$, and $P_2$ in the diagram do not refer to specific points.  Rather, they label their corresponding regions of stability (lines, or, in the case of $P_2$, a half-line).
Generically, at a point not on any of the lines in the picture, the
subcategory of semistable submodules is the zero category.
Finally, at
the origin, every module is semistable.  
Pictures like this, in the hereditary case, have been studied by \cite{IOTW, Ch, IT, IPT}.  


Recall that an $A$-module is called a brick if its endomorphism ring is a
division algebra.  To understand the semistable subcategories
of $A$-mod, it is sufficient to understand semistability of bricks, by
the following lemma.  

\begin{lemma} An exact abelian extension-closed subcategory of $A$-mod is determined by the
  bricks it contains. \end{lemma}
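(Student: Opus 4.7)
The plan is to reduce the question to showing that $\mathcal{C}$ is controlled by its simple objects, and that every simple of $\mathcal{C}$ is a brick in $\Am$. If this holds, then two such subcategories sharing the same bricks contain the same simples, and extension-closure will propagate agreement to all objects.

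I would first verify that $\mathcal{C}$ is itself a finite-length abelian category. Since the embedding $\mathcal{C} \hookrightarrow \Am$ is exact, any chain of subobjects in $\mathcal{C}$ is a chain in $\Am$, so it must terminate. Hence $\mathcal{C}$ is noetherian and artinian, and every $M \in \mathcal{C}$ admits a composition series inside $\mathcal{C}$ whose quotients are simple objects of $\mathcal{C}$. I would then check that each simple object $S$ of $\mathcal{C}$ is a brick as an $A$-module: for any nonzero $\varphi \in \Hom_A(S,S)$, both $\ker\varphi$ and $\operatorname{im}\varphi$ are subobjects of $S$ in $\mathcal{C}$ (using that $\mathcal{C}$ is exact abelian), so simplicity of $S$ in $\mathcal{C}$ forces $\varphi$ to be invertible, whence $\operatorname{End}_A(S)$ is a division algebra.

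Finally, suppose $\mathcal{C}_1$ and $\mathcal{C}_2$ are two such subcategories containing the same bricks. Every simple of $\mathcal{C}_1$ is a brick of $\Am$ lying in $\mathcal{C}_1$, and hence also lies in $\mathcal{C}_2$. For any $M \in \mathcal{C}_1$, an induction on the length of a composition series inside $\mathcal{C}_1$ shows $M \in \mathcal{C}_2$, with the inductive step coming from extension-closure of $\mathcal{C}_2$; by symmetry, $\mathcal{C}_1 = \mathcal{C}_2$. The main thing to get right is the finite-length claim for $\mathcal{C}$ (together with the observation that its simples really are bricks of $\Am$); once those are secured, the remainder is bookkeeping.
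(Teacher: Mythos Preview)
Your argument is correct, but it proceeds differently from the paper's. The paper argues directly that every object of $\mathcal{C}$ is filtered by bricks of $\mathcal{C}$: given $X\in\mathcal{C}$ which is not a brick, it takes a non-invertible endomorphism $\alpha$ and writes $X$ as an extension of $\operatorname{im}\alpha$ by $\ker\alpha$, both of strictly smaller total dimension and both in $\mathcal{C}$, so the claim follows by induction on dimension. Thus the paper characterises $\mathcal{C}$ as the closure under extensions of \emph{all} its bricks.

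Your route instead passes through the simple objects of $\mathcal{C}$: you observe that $\mathcal{C}$ is finite-length (via the exact embedding into $\Am$), take composition series, and check that the simples of $\mathcal{C}$ are bricks of $\Am$ by a Schur's-lemma argument. This gives the sharper statement that $\mathcal{C}$ is already determined by the subset of bricks that are simple in $\mathcal{C}$. Your approach is a bit more structural and yields slightly more information; the paper's is more elementary, needing only the dimension count and avoiding any discussion of composition series internal to $\mathcal{C}$.
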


\begin{proof}  Let $\mathcal C$ be an abelian extension-closed subcategory of
  $A$-mod, and let $\mathcal B$ be the set of bricks it contains.
  The statement of the lemma follows from the fact which we shall establish
  that the objects of
  $\mathcal C$ are exactly those $A$-modules which admit a filtration by
  modules in  $\mathcal B$.

  Clearly, any module filtered by modules in $\mathcal B$ is contained in
  $\mathcal C$, because $\mathcal C$ is extension-closed.  Conversely, let $X\in \mathcal C$.  If $X$ is a brick,
  it is in $\mathcal B$ and we are done.  Otherwise, $X$ admits a
  non-invertible endomorphism $\alpha$.  Now $X$ is isomorphic to
  the extension of
  the image of $\alpha$ by its kernel, both of which have smaller total dimension
  than $X$, and both of which are in $\mathcal C$ because it is an exact abelian subcategory, so we are done by induction. \end{proof}

Thus, if we want to understand the map from $\KzeroR$ to semistable subcategories, it suffices to understand, for each brick of $\Am$, the region of
$\KzeroR$ for which it is semistable.  The category $(\Am)^\phi$ will
consist of all modules filtered by the bricks that are semistable for
$\phi$.

Our goal in this paper is to describe this picture for finite-type
preprojective algebras.  This will require a detour into the theory of
hyperplane arrangements.  

\section{Preprojective algebras}

First, though, we introduce the
finite-type preprojective algebras.  Let $Q$ be a simply-laced
Dynkin quiver, with a set $Q_0=\{1,\dots,n\}$ of vertices and a set $Q_1$ of
arrows.  Define $\overline Q$ to be the doubled quiver of $Q$, which
is to say, for each arrow $a: i \rightarrow j$, we add an arrow $a^*: i \leftarrow j$.  The preprojective algebra is then defined to be:
$$\Pi=k\overline Q/\sum_{a\in Q_1} (aa^*-a^*a).$$
This is a finite-dimensional self-injective algebra.

Preprojective algebras
were originally introduced by Gelfand and Ponomarev \cite{GP}, and, in a version
closer to the formulation which is now standard, by Dlab and Ringel \cite{DR}.
They arise naturally in geometric representation theoretic contexts, playing,
for example, an essential role in Lusztig's definition of the semicanonical
basis of the enveloping algebra of the positive part of a symmetric Kac-Moody Lie algebra \cite{Lu}.  For our purposes, we can just take them as an
interesting class of algebras
with a Dynkin classification; as we shall see, other elements of Dynkin diagram
combinatorics will also turn out to be relevant to their analysis.

As a simple example, let us consider the preprojective algebra of type $A_2$.
We have two vertices $1$ and $2$, an arrow $a$ from 1 to 2, an arrow $a^*$ from 2 to 1, and the relation $aa^*-a^*a$.  Multiplying this relation on both sides
by the idempotent at 1, and multiplying on both sides by the idempotent at 2,
we deduce that the ideal generated by $aa^*-a^*a$
actually contains each of $aa^*$ and $a^*a$, so in this case, we could have
described the ideal of relations as being generated by $aa^*$ and $a^*a$.

Either by noticing that this implies that the preprojective algebra of type
$A_2$ happens to be a gentle algebra, or just by
thinking about it, we determine that this algebra has four indecomposable
modules: the simples at each vertex and the projectives at each vertex,
which are of length two.  All four of these modules are bricks.  In keeping
with the point of view developed in the previous section, we can ask ourselves
where these bricks are semistable.  The answer is given in Figure \ref{f-two}.

\begin{figure}[h]
  $$\begin{tikzpicture}[scale=2]
  \draw [rotate=0] (0.1,0) -- (1,0) node[right]{$P_1$};
  \draw [rotate=60] (0,0)--(1,0) node[right]{$[S_1]^\perp$}; 
  \draw [rotate=120] (0,0)--(1,0) node[left]{$[S_2]^\perp$};
  \draw [rotate=180] (0.1,0)--(1,0) node[left]{$P_2$};
  \draw [rotate=240] (0,0)--(1,0) node[left]{$S_1$};
  \draw[rotate=300] (0,0)--(1,0) node[right]{$S_2$};
\end{tikzpicture}$$
\caption{\label{f-two}Regions of semistability for the preprojective algebra of type $A_2$}
\end{figure}
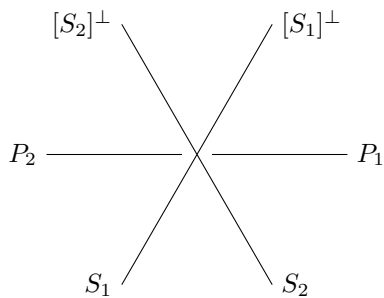

The verification that this picture is correct is essentially the same as
for the hereditary example examined above.
For clarity, the regions where $P_1$ and $P_2$ are semistable
are drawn as if they don't quite touch the origin,
but in fact they extend up
to and include it.

We notice that if we consider the union of the lines and half-lines where at least one brick is
semistable, this is a very symmetrical picture.  As we shall see, this is no
coincidence, but in order to make this notion precise, we shall have to
introduce some further technology: specifically, we shall have to introduce
the Weyl groups to provide the symmetries we want.  

\section{Weyl groups}
Good general references for Weyl groups are \cite{Hu,BB}.  

We want to define a bilinear form on $K_0(\Pi)$.  This can be defined very
explicitly by saying that $\la [S_i],[S_i]\ra = 2$, and for $j\ne i$,
$\la [S_i],[S_j] \ra$ is minus the number of arrows between vertices $i$ and
$j$ in $Q$.

A more conceptual definition is to consider the affine-type quiver $\hat Q$
which is obtained by adding a single vertex to $Q$.  (There is a unique way
to do this.)  We can then define the corresponding preprojective algebra
$\hat \Pi$, and the category of nilpotent $\hat \Pi$-modules, and consider its Grothendieck group and Euler form: for $V$ and
$W$ nilpotent $\hat \Pi$-representations,
\begin{eqnarray*}\la [V],[W]\ra &=& \sum_{i=0}^\infty (-1)^i\dim \Ext^i(V,W)\\
&=& \dim \Hom(V,W) - \dim \Ext^1(V,W) + \dim \Ext^2(V,W).\end{eqnarray*}
Restricted to the subspace spanned by the classes of the simple modules of $\Pi$,
we recover the form defined in the previous paragraph.  Note that
we cannot directly take the Euler form for $\Pi$ because its global dimension  is not finite, so $\sum_{i=0}^\infty(-1)^i \dim \Ext^i(V,W)$ is not
well-defined.

The bilinear form on $K_0(\Pi)$ turns out to be positive definite.  Since,
in particular, it is non-degenerate, we can use it to identify
$K_0(\Pi)\otimes \mathbb R$ with its dual, and
thus define a positive definite symmetric bilinear form
on $\KzeroPR$.  

We now want to define a group action on $V=\KzeroPR$.  
Let $e_1,\dots,e_n$ be
the standard basis for $V$, with $e_i([S_j])=\delta_{ij}$, where
$\delta_{ij}$ is 1 if $i=j$ and 0 otherwise.  Define a linear transformation
$s_i$ of $V$ by

$$s_i(\phi)= \phi- \la e_i,\phi \ra e_i$$

Each $s_i$ acts by reflecting in a hyperplane with respect to the bilinear
form on $V$.  We call the transformations $s_i$ 
\emph{simple reflections}.  

The group generated by these $n$ elements forms, by definition,
the Weyl group associated
to $Q$.  We denote it by $W$, and we think of it as acting on $V$ on the right.

For future use, for any $w\in W$,
we define $\ell(w)$ to be the length of the shortest possible
expression for $w$ as a product of the simple reflections.  The identity element
is the unique element of length zero, and the simple reflections are exactly
the elements of length one. 

Define $T$ to be the set of conjugates in $W$
of $s_1,\dots,s_n$.  It is easy to see that all of these elements will also act
by reflections.  In fact, they are all the reflections in $W$.  

By definition, the {\emph reflection arrangement} $\Hp$
associated to $W$ consists
of the collection of reflecting hyperplanes in $V$ associated to the set $T$ of
reflections.  

We can now state a rough version of our main result: the region in $\KzeroPR$ where $(\Pm)^\phi \ne 0$ consists of exactly the
  union of the hyperplanes in $\Hp$.  This accounts for the regularity
which we observed in the case of the $A_2$ preprojective algebra.
  In order to refine this result to get a picture like Figure \ref{f-two},
  which reflects where each
  brick is semistable, we will need some
  way to divide the reflecting hyperplanes up into pieces.  It turns out that
  a natural way to do this was developed, for superficially different
  purposes, by Reading \cite{Re1}, as we now explain.




  
\section{Shards}
We must now take a detour into the theory of hyperplane arrangements and,
in particular, the poset structure on the poset of regions defined by a
hyperplane arrangement.  The key results we need are to be found in
\cite{Re1}.  \cite{Re2} is an exposition which provides further context.  

Let $\H$ be a hyperplane arrangement in $\mathbb R^n$, by which we mean a collection of
finitely many linear hyperplanes in $\mathbb R^n$.  

$\H$ defines a set of \emph{chambers}, which are the connected components of
$\mathbb R^n \setminus \bigcup_{H\in \H} H$.  

There is a natural graph structure associated to $\H$, which we denote
$G(\H)$.  The vertices are the
chambers, and two chambers are adjacent if their closures intersect in a
codimension-one region in $\mathbb R^n$.  

We shall define a poset structure on the set of chambers by specifying
its \emph{cover relations},
that is to say, the pairs
$E,F$ such that $E<F$ and there is no $G$ with $E<G<F$.  We write
$E\lessdot F$ for a cover relation in a poset.  The Hasse diagram of a
poset is the directed graph on the elements of the poset, whose edges
$(E,F)$ are exactly the cover relations $E\lessdot F$ of the poset.  

Choose a base chamber and call it $C$.  
Define the chamber poset $P(\H,C)$ on the set of chambers by imposing that
$E\lessdot F$ if and only if $E$ and $F$ are adjacent and
$E$ lies on the same side as $C$ of the hyperplane defined by the intersection
of the closures of $E$ and $F$.

The Hasse diagram of $P(\H,C)$ is then an orientation of the graph $G(\H)$.
The chamber $C$ is the unique source, corresponding to the fact that it is
the minimum element of the poset, and the chamber $-C$ is the maximum element
of the poset.  (For any chamber $E$, note that $-E$ also forms a chamber.)
This poset was introduced by Edelman \cite{Ed}.

A chamber is called \emph{simplicial} if it consists of positive linear
combinations of $n$ linearly independent vectors.  $\H$ is called simplicial
if all its chambers are simplicial.  If $n\leq 2$, all hyperplane arrangements
are simplicial, but this is not true for larger $n$.

An important source of simplicial arrangements are the \emph{reflection
  arrangements}.  The reflection arrangement associated to $\Pi$, which we
have already introduced, is an example, but any finite Coxeter group yields a
reflection arrangement in the same way.
The reader who is unfamiliar with Coxeter groups
may simply take our statements about reflection arrangements as applying to
the reflection arrangements we have already introduced, with no loss.  


Let $\H$ be a reflection arrangement.  
$W$ acts
on the set of chambers simply-transitively, so, after identifying the base
chamber $C$ with the identity element of $W$, we can identify the chambers
with the elements of $W$.  The poset $P(\H,C)$ is then a well-known poset
on $W$, known as \emph{(right) weak order}, in which the cover relations are
given by $v\lessdot vs_i$ if $\ell(vs_i)=\ell(v)+1$.  

A poset is called a \emph{lattice} if any pair of elements $E,F$ has a
unique greatest lower bound, denoted $E\wedge F$ (the \emph{meet} of $E$ and $F$), and a unique least upper
bound, denoted $E\vee F$ (the \emph{join} of $E$ and $F$).  

\begin{theorem}[{\cite[Theorem 3.4]{BEZ}}] If $\H$ is a simplicial arrangement, then
  the poset $P(\H,C)$ is a lattice. \end{theorem}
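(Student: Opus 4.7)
The plan is to reformulate the order in terms of separating sets. For each chamber $E$, let $S(E)$ denote the set of hyperplanes of $\H$ that separate $E$ from the base chamber $C$. A short induction on gallery distance shows that $E\lessdot F$ in $P(\H,C)$ iff $E$ and $F$ are adjacent and $S(F) = S(E)\cup\{H\}$ for some hyperplane $H$, and consequently $E \le F$ iff $S(E)\subseteq S(F)$. In particular $C$ (with $S(C)=\emptyset$) is the minimum and $-C$ (with $S(-C)=\H$) is the maximum of $P(\H,C)$, so the poset is bounded.

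Because $P(\H,C)$ is finite with a maximum element, it is a lattice iff every pair of chambers has a meet. So I fix chambers $E$ and $F$ and aim to construct $E\wedge F$. A chamber $D$ is a common lower bound of $E$ and $F$ iff $S(D)\subseteq S(E)\cap S(F)$; the set of common lower bounds is nonempty (it contains $C$) and finite, so it admits at least one maximal element. The natural candidate for the meet is a chamber $G$ whose separating set is maximal among separating sets of chambers contained in $S(E)\cap S(F)$. What remains is to show that such a maximal common lower bound is unique.

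The uniqueness step is where simpliciality enters, and where I expect the main work to lie. Suppose for contradiction that $G$ and $G'$ are two distinct maximal common lower bounds. Their separating sets differ, so one can locate a codimension-two flat $L$ for which the rank-two subarrangement $\H_L$ of hyperplanes containing $L$ detects a difference between $S(G)$ and $S(G')$. Rank-two arrangements have chamber posets that are total orders, so the restricted separating sets $S(G)\cap\H_L$ and $S(G')\cap\H_L$ are comparable under inclusion; this suggests that one can \emph{flip} $G$ (say) across an appropriate hyperplane of $\H_L$ to produce a chamber whose separating set is strictly larger than $S(G)$ but still contained in $S(E)\cap S(F)$, contradicting the maximality of $G$.

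The main obstacle will be turning this sketch into a genuine global argument. The flip around $L$ is a purely local operation in $\H_L$, and to execute it one needs a chamber of $\H$ adjacent to $G$ across exactly the desired hyperplane, together with the guarantee that its global separating set changes in precisely the expected way and remains inside $S(E)\cap S(F)$. The simpliciality of $\H$ is exactly what ensures that each chamber is bounded by the ``right'' number of walls and that the required local flips lift to global ones in a controlled fashion; without it, distinct maximal common lower bounds can genuinely occur and the poset fails to be a lattice.
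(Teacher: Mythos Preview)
The paper does not give its own proof of this statement: it is quoted verbatim as \cite[Theorem 3.4]{BEZ} and used as a black box. So there is no in-paper proof to compare against; any comparison has to be to the original Bj\"orner--Edelman--Ziegler argument.

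Your overall strategy---reformulate the order via separating sets $S(E)$, reduce to the existence of meets, and study maximal common lower bounds---is indeed the shape of the BEZ proof. One point in your sketch is actually wrong, though, and it sits exactly where you try to use simpliciality. You assert that ``rank-two arrangements have chamber posets that are total orders,'' and deduce that $S(G)\cap\H_L$ and $S(G')\cap\H_L$ are comparable. A rank-two arrangement with $r\ge 2$ hyperplanes has $2r$ regions, and its poset of regions is two chains from $C$ to $-C$ glued at their endpoints, not a chain; for instance $\{H_1\}$ and $\{H_r\}$ are incomparable separating sets. So the comparability you need does not come for free from rank two, and the ``flip'' you describe cannot be located just from that. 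In BEZ the work is done differently: simpliciality is used to show that the walls of a chamber are exactly the \emph{basic} hyperplanes of its separating set (those $H\in S(E)$ with $S(E)\setminus\{H\}$ again a separating set of some chamber), and one argues with convexity of the intersection $S(E)\cap S(F)$ to produce the required downward move. Your final paragraph already anticipates that the honest content lives here; the specific rank-two shortcut you propose does not survive as stated.
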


Simplicialness is not necessary for $P(\H,C)$ to be a lattice,
see \cite{Re2}.  However,
since our eventual application will be to reflection arrangements, we may
as well not seek the greatest possible generality.

A lattice $L$ is called semi-distributive if for $E,F,G$ in $L$ such that $E\vee F=E\vee G$ it follows that this element also equals $E\vee(F\wedge G)$, and
dually if $E\wedge F=E\wedge G$, then this element also equals $E\wedge(F\vee G)$.  We have the following result:

\begin{theorem}[{\cite[Corollary 9-3.9]{Re2}}] If $\H$ is a simplicial arrangement, then the lattice $P(\mathcal H,C)$ is semidistributive. \end{theorem}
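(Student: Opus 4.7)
My plan is to encode each chamber $R$ by its separating set $S(R) = \{H \in \H : H \text{ separates } R \text{ from } C\}$, translating the lattice $P(\H,C)$ into a lattice of distinguished subsets of $\H$, and then to deduce semidistributivity from a rank-two local analysis made possible by simpliciality.

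First I would recall the standard fact (due to Edelman) that $R \leq R'$ in $P(\H,C)$ if and only if $S(R) \subseteq S(R')$, so the order is faithfully encoded set-theoretically. Building on the proof of the previous theorem, one obtains an explicit recipe: $S(E \vee F)$ is the result of a closure operation $\overline{\phantom{X}}$ applied to $S(E)\cup S(F)$, where the closure repeatedly inspects each rank-two flat $L$ of $\H$ and adjoins whichever hyperplanes through $L$ are forced so that the restriction to $L$ forms a "cyclic interval" in the circular order of lines through $L$. Dually, $S(E\wedge F)$ arises from $S(E)\cap S(F)$ by removing hyperplanes at each rank-two flat until the cyclic-interval condition is restored.

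For join-semidistributivity, I would assume $E \vee F = E \vee G$, so
\[ \overline{S(E)\cup S(F)} \;=\; \overline{S(E)\cup S(G)}. \]
The aim is to show $\overline{S(E)\cup S(F\wedge G)}$ equals this common set. The inclusion $\subseteq$ is immediate. For $\supseteq$, one proceeds inductively on closure steps: every hyperplane adjoined in forming $\overline{S(E)\cup S(F)}$ either already lies in $S(E)$, or is forced at some rank-two flat $L$ by a configuration of hyperplanes through $L$ that must also be forced when building $\overline{S(E)\cup S(G)}$. Since the two closures coincide, the hyperplanes of $\H$ through $L$ responsible for the forcing must lie in $S(E)\cup(S(F)\cap S(G))$, and the largest closed subset of $S(F)\cap S(G)$ is by definition $S(F\wedge G)$. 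Thus every forced hyperplane is already produced from $S(E)\cup S(F\wedge G)$, giving $E \vee (F\wedge G) = E\vee F$. Meet-semidistributivity then follows by the antipodal symmetry $R \mapsto -R$, which is an order-reversing bijection between $P(\H,C)$ and $P(\H,-C)$, interchanging meets and joins; applying the join-semidistributive axiom to the (still simplicial) arrangement based at $-C$ yields the meet-semidistributive axiom at $C$.

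The main obstacle will be the tracing argument in the previous paragraph: showing that the hyperplanes introduced by iterated closure can be attributed to the \emph{intersection} $S(F)\cap S(G)$ rather than to $S(F)$ or $S(G)$ alone. This is where simpliciality is indispensable, since it forces the walls of every chamber to be linearly independent, and so the rank-two forcing relations decouple cleanly across distinct $2$-flats. Without simpliciality the local forcing at one $2$-flat could interact in higher codimension with forcing at another, and the attribution argument would fail; with it, the local analysis at each rank-two flat reduces to the elementary (and already simplicial) case of a line arrangement in the plane.
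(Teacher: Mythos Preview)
The paper does not give its own proof of this statement; it is quoted from \cite[Corollary 9-3.9]{Re2} and used as a black box, so there is no argument in the paper to compare your proposal against.

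That said, your sketch has a genuine gap at its central step. You assert that if $\overline{S(E)\cup S(F)}=\overline{S(E)\cup S(G)}$, then every hyperplane adjoined while closing $S(E)\cup S(F)$ is forced, at some rank-two flat $L$, by witnesses lying in $S(E)\cup(S(F)\cap S(G))$. Nothing you have written justifies this. A hyperplane $H$ through $L$ may be forced in the first closure by witnesses $H_1\in S(E)$ and $H_2\in S(F)\setminus S(G)$, and in the second closure by \emph{different} witnesses $H_1'\in S(E)$ and $H_2'\in S(G)\setminus S(F)$; equality of the two closures says nothing about the witnesses agreeing, and without that you cannot attribute the forcing to $S(E)\cup(S(F)\cap S(G))$. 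Worse, closure steps cascade: a hyperplane adjoined at stage $k$ can serve as a witness at stage $k{+}1$, so the attribution problem compounds rather than ``decoupling cleanly across distinct $2$-flats'' as you suggest. Simpliciality does buy you the rank-two (biconvexity) description of separating sets, which is essentially the content of \cite{BEZ}, but it does not by itself make the tracing argument go through. In Reading's treatment the missing structural ingredient is that $P(\H,C)$ is a \emph{polygonal} lattice, and semidistributivity is extracted from that via an edge-labelling argument rather than by tracking closure witnesses directly. Your final appeal to the antipodal symmetry $R\mapsto -R$ to pass from join- to meet-semidistributivity is fine.
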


It is an immediate consequence of
semidistributivity that if $G>E$ then there is a unique minimum element
among all elements $F$ such that $E\vee F=G$.  

An element $E$ of a lattice $L$ is called \emph{join-irreducible} if it
is not the minimum element of the lattice, and it cannot
be written as $E=F\vee G$ with $F,G<E$.  The following lemma is an easy
exercise.  

\begin{lemma} \label{unique} If $L$ is a finite lattice, then
  $E$ is join-irreducible in $L$ if and only if $E$ covers exactly one element.  \end{lemma}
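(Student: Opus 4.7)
My plan is to prove both implications directly from the definitions, using only that $L$ is finite so saturated chains exist.

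For the forward direction, suppose $E$ is join-irreducible. Because $E$ is not the minimum of $L$, any saturated chain from the minimum up to $E$ ends in a cover relation, so $E$ covers at least one element. To rule out $E$ covering two distinct elements, suppose for contradiction that $E\gtrdot F_1$ and $E\gtrdot F_2$ with $F_1\ne F_2$. Neither $F_1\leq F_2$ nor $F_2\leq F_1$ can hold, since any strict inequality $F_i<F_j$ combined with $F_j<E$ would insert an element strictly between $F_i$ and $E$, contradicting the cover $E\gtrdot F_i$. Hence $F_1\vee F_2$ strictly exceeds $F_1$, and since $F_1\vee F_2\leq E$ and $E$ covers $F_1$, we must have $F_1\vee F_2=E$. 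This contradicts join-irreducibility.

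For the backward direction, suppose $E$ covers exactly one element $F$. Then $E$ is not the minimum of $L$. The key observation is that, in a finite lattice, every $G<E$ satisfies $G\leq F$: indeed, a saturated chain $G=H_0\lessdot H_1\lessdot\cdots\lessdot H_k=E$ exists by finiteness, its penultimate element $H_{k-1}$ is covered by $E$, and so $H_{k-1}=F$, giving $G\leq F$. Now if $E=G_1\vee G_2$ with $G_1,G_2<E$, then $G_1,G_2\leq F$, whence $E=G_1\vee G_2\leq F<E$, a contradiction. Therefore $E$ is join-irreducible.

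There is no serious obstacle here; the statement is exactly the sort of finite-lattice exercise advertised. The one observation worth isolating is the fact that, in a finite lattice, every element strictly below $E$ lies weakly below some element covered by $E$, which is what makes the backward direction immediate and also underlies the contradiction in the forward direction.
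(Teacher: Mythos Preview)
Your proof is correct. The paper does not supply a proof of this lemma at all; it simply declares the statement ``an easy exercise'' and moves on. Your argument fills in precisely what such an exercise would require, and both directions are handled cleanly: the forward direction via $F_1\vee F_2=E$ from two distinct lower covers, and the backward direction via the observation that in a finite lattice every element strictly below $E$ lies under the unique lower cover.
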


If $E$ is join-irreducible, we write $E_*$ for the unique element which it covers.

In a finite lattice, every element can be written as a join of join-irreducible
elements, so they have an obviously important structural rôle.  (This rôle
is shared with the meet-irreducible elements, which are defined dually, and
can be studied in the same way as we are doing for join-irreducible
elements.)

There is a natural labelling of the edges of the Hasse diagram of $P(\H,C)$
by join-irreducible elements, as follows: define the join-irreducible
label $j(E\gtrdot F)$ to be the minimum $G$ such that $E\vee G=F$.  By semidistributivity,
this is well-defined, and it is clear that it must be join-irreducible.

Given the importance of join-irreducible elements of a lattice, 
it is natural to ask how to see the join-irreducible elements
of $P(\H,C)$ in terms of the geometry of $\H$.  

Each join-irreducible of $P(\H,C)$ is naturally associated to a particular
hyperplane.  Namely, if $E$ is join-irreducible, then by Lemma \ref{unique},
it covers a unique other chamber $F$, and by the definition of the cover
relations in $P(\H,C)$, the span of
intersection of the closures of $E$ and $F$ defines a hyperplane of $\H$.

This map from join-irreducible elements of $P(\H,C)$ to $\H$ is not a bijection,
as demonstrated by Figure \ref{f-three}, which show the Hasse diagram of the poset of regions of a two-dimensional hyperplane arrangement superimposed over the hyperplane arrangement.  We always draw the base chamber at the bottom.  
The join-irreducible elements are marked with black
dots, and the arrows indicate the map from join-irreducibles to hyperplanes.
We see that there are two join-irreducible elements which are associated
to the hyperplanes $H_2$ and $H_3$.

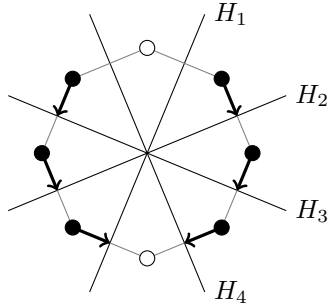
\begin{figure}[h]
  $$\begin{tikzpicture}[scale=2]
  \draw [rotate=45/2] (0,0) -- (1,0) node[right]{$H_2$};
  \draw [rotate=135/2] (0,0)--(1,0) node[right]{$H_1$}; 
  \draw [rotate=90+22.5] (0,0)--(1,0);
  \draw [rotate=180-22.5] (0,0)--(1,0);
  \draw [rotate=180+22.5] (0,0)--(1,0);
  \draw[rotate=270-22.5] (0,0)--(1,0);
    \draw[rotate=270+22.5] (0,0)--(1,0) node[right]{$H_4$};
    \draw[rotate=-22.5] (0,0)--(1,0) node[right]{$H_3$};
    \draw[gray] (90:.7) -- (135:.7) -- (180:.7) -- (225:.7) -- (270:.7) -- (315:.7) --
    (0:.7) -- (45:.7) -- (90:.7);
    \draw[very thick,->](45:.7)--(22.5:.7*.9239);
        \draw[very thick,->](-45:.7)--(-45-22.5:.7*.9239);
    \draw[very thick,->](0:.7)--(-22.5:.7*.9239);
    \draw[very thick,->](180-45:.7)--(180-22.5:.7*.9239);
    \draw[very thick,->](180+45:.7)--(180+45+22.5:.7*.9239);
    \draw[very thick,->](180:.7)--(180+22.5:.7*.9239);

    \draw[fill=white] (90:.7) circle (0.05);
   \draw[fill=black] (135:.7) circle (0.05);
   \draw[fill=black] (180:.7) circle (0.05);
   \draw[fill=black] (225:.7) circle (0.05);
   \draw[fill=white] (270:.7) circle (0.05);
   \draw[fill=black] (315:.7) circle (0.05);
   \draw[fill=black] (0:.7) circle (0.05);
   \draw[fill=black] (45:.7) circle (0.05);
  \end{tikzpicture}$$
\caption{\label{f-three}Poset of regions of a two-dimensional hyperplane arrangement}
  \end{figure}

To define a bijection from join-irreducible elements to something geometric,
Reading was impelled to split some of the hyperplanes in two, as in Figure
\ref{f-four}. Now, each join-irreducible element (black dot) has a distinct hyperplane
or half-hyperplane directly below it.  

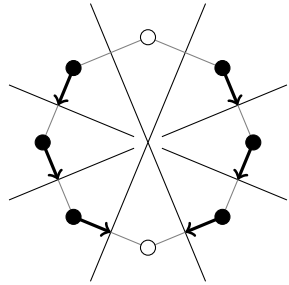
\begin{figure}[h]
  $$\begin{tikzpicture}[scale=2]
  \draw [rotate=45/2] (0.1,0) -- (1,0);
  \draw [rotate=135/2] (0,0)--(1,0); 
  \draw [rotate=90+22.5] (0,0)--(1,0);
  \draw [rotate=180-22.5] (0.1,0)--(1,0);
  \draw [rotate=180+22.5] (0.1,0)--(1,0);
  \draw[rotate=270-22.5] (0,0)--(1,0);
    \draw[rotate=270+22.5] (0,0)--(1,0);
    \draw[rotate=-22.5] (0.1,0)--(1,0);
    \draw[gray] (90:.7) -- (135:.7) -- (180:.7) -- (225:.7) -- (270:.7) -- (315:.7) --
    (0:.7) -- (45:.7) -- (90:.7);
    \draw[very thick,->](45:.7)--(22.5:.7*.9239);
        \draw[very thick,->](-45:.7)--(-45-22.5:.7*.9239);
    \draw[very thick,->](0:.7)--(-22.5:.7*.9239);
    \draw[very thick,->](180-45:.7)--(180-22.5:.7*.9239);
    \draw[very thick,->](180+45:.7)--(180+45+22.5:.7*.9239);
    \draw[very thick,->](180:.7)--(180+22.5:.7*.9239);

   \draw[fill=white] (90:.7) circle (0.05);
   \draw[fill=black] (135:.7) circle (0.05);
   \draw[fill=black] (180:.7) circle (0.05);
   \draw[fill=black] (225:.7) circle (0.05);
   \draw[fill=white] (270:.7) circle (0.05);
   \draw[fill=black] (315:.7) circle (0.05);
   \draw[fill=black] (0:.7) circle (0.05);
   \draw[fill=black] (45:.7) circle (0.05);
  \end{tikzpicture}
  $$
  \caption{\label{f-four}Splitting hyperplanes when $n=2$}
\end{figure}

More formally, when $n=2$,
Reading splits in two the hyperplanes which are not adjacent
to the base chamber, and calls this set of hyperplanes and half-hyperplanes
the \emph{shards} of $\H$.  
Now, if $E$ is a join-irreducible element of $P(\H,e)$,
we see that the facet of $E$ corresponding to the unique cover $E\gtrdot F$
lies in a well-defined shard of $\H$, and this gives us a bijection from
join-irreducible elements to shards.  

The fact that this works for  $n=2$ is a rather trivial observation.
The surprising fact is that
this simple strategy of splitting up hyperplanes is exactly what is needed
in general.  

To define the general strategy, we need to introduce some further notation.  Let
$\Ht$ be the set of codimension-two intersections of hyperplanes from
$\H$, i.e., $$\Ht=\{H\cap K\mid H,K \in \H, H\ne K\}.$$

For each $X\in \Ht$, consider the hyperplanes in $\H$ containing $X$.
Note that since $C$ is a chamber of our original arrangement, and we are
now considering the sub-arrangement of just those hyperplanes that contain
$X$, the chamber $C$ is located in a particular chamber of this sub-arrangement.
Number the hyperplanes containing $X$ cyclically as $H_1, H_2, \dots, H_r$
so that $C$ is between $H_1$ and $H_r$, as in Figure \ref{f-five}.

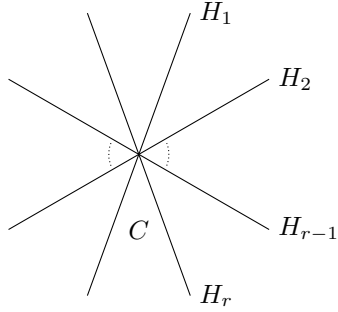
\begin{figure}[h]
  $$\begin{tikzpicture}[scale=2]
  \draw [rotate=90-60] (0,0) -- (1,0) node[right]{$H_2$};
  \draw [rotate=90-20] (0,0)--(1,0) node[right]{$H_1$}; 
  \draw [rotate=90+20] (0,0)--(1,0);
  \draw [rotate=90+60] (0,0)--(1,0);
  \draw [rotate=270-60] (0,0)--(1,0);
  \draw[rotate=270-20] (0,0)--(1,0);
    \draw[rotate=270+20] (0,0)--(1,0) node[right]{$H_r$};
    \draw[rotate=270+60] (0,0)--(1,0) node[right]{$H_{r-1}$};
    \draw[densely dotted] (-22:.2) arc (-22:22:.2);
    \draw[densely dotted] (180-22:.2) arc (180-22:180+25:.2);
    \draw (0,-.5) node{$C$};
  \end{tikzpicture}$$
  \caption{\label{f-five}Numbering hyperplanes around a codimension 2 intersection}
\end{figure}
  
The idea is that, around $X$, we will split the hyperplanes as in the $n=2$
situation previously discussed.  We want $X$ to split the hyperplanes
which are not adjacent to $C$, so we define $\Sp(X)=\{H_2,\dots,H_{r-1}\}$.  

Now, a hyperplane $H\in \H$ is split into a set of shards, which we denote $\Sh_H$, by defining
$$\Sh_H= \textrm{ the components of } \left(H\setminus \bigcup_{X \mid H\in \Sp(X)} X \right)$$
and
$$\Sh(\H)=\bigcup_{H\in \H} \Sh_H.$$

Given a cover relation $E\gtrdot F$ in $P(\H,C)$, the
intersection of the two chambers $E$ and $F$ (which is a cone in the
hyperplane separating them) lies entirely in one shard.  We can therefore
define $\Sh(E\gtrdot F)$ to be this shard.   

Reading proved:

\begin{theorem}{\cite[Proposition 3.3]{Re1}}
  The map from join-irreducible elements of $P(\H,e)$ to shards, sending
  a join-irreducible $G$ to $\Sh(G\gtrdot G_*)$, is a bijection.  
\end{theorem}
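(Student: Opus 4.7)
The plan is to construct an explicit inverse to the stated map. Given a shard $\sigma$ lying in a hyperplane $H \in \H$, let $T_\sigma$ denote the collection of chambers $E$ admitting some lower cover $F$ with $\Sh(E \gtrdot F) = \sigma$. I would aim to show that $T_\sigma$ has a unique minimum element $G(\sigma)$ in $P(\H,C)$, that $G(\sigma)$ is join-irreducible, and that $G \mapsto \Sh(G \gtrdot G_*)$ and $\sigma \mapsto G(\sigma)$ are mutual inverses.

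Nonemptiness of $T_\sigma$ is easy: pick a point in the relative interior of $\sigma$ and take a chamber $E$ whose closure contains a neighborhood of that point on the non-$C$-side of $H$; the cover across $H$ then has shard label $\sigma$. Once a minimum $G(\sigma)$ is exhibited, the definition immediately gives $\Sh(G(\sigma) \gtrdot G(\sigma)_*) = \sigma$. Conversely, any join-irreducible $G$ mapping to $\sigma$ lies in $T_\sigma$ by construction, and its equality with $G(\sigma)$ will follow from minimality together with Lemma~\ref{unique}: if some $G' \in T_\sigma$ satisfied $G' < G$, we would build a second lower cover of $G$ via a polygon argument, contradicting join-irreducibility.

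The core of the proof is showing that any minimal element $G$ of $T_\sigma$ is join-irreducible. Suppose for contradiction that $G$ has distinct lower covers $F$ (with $\Sh(G \gtrdot F) = \sigma \subset H$) and $F'$ across some $H' \neq H$. Consider the codimension-two intersection $X = H \cap H' \in \Ht$ and the polygon of chambers around $X$ in $P(\H,C)$: the two covers from $G$ begin two descending paths around the polygon, meeting at a common bottom element. The key local lemma I would need is that the $H$-crossing cover relation emerging downward from $F'$ also carries the shard label $\sigma$, unless $H$ is split at $X$, i.e.\ $H \in \Sp(X)$. A careful examination of the cyclic numbering defining $\Sp(X)$—using that $G$ lies on the non-$C$-side of both $H$ and $H'$, so that $H$ is not one of the outer hyperplanes $H_1, H_r$ cut out by $C$—shows $H \notin \Sp(X)$ in this configuration. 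This places $F' \in T_\sigma$ and contradicts the minimality of $G$.

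The main obstacle is this codimension-two polygon analysis: stating and proving the precise condition under which two parallel edges of a rank-two polygon in $P(\H,C)$ share a shard label, and then verifying that the minimality hypothesis always forces us into the unsplit regime. Once this local statement at each $X \in \Ht$ is established, the remaining assembly—existence of $G(\sigma)$ as a well-defined minimum, and verification that the two constructions invert each other—is essentially bookkeeping.
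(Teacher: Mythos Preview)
The paper does not give its own proof of this theorem; it is quoted directly from Reading \cite[Proposition 3.3]{Re1} and used as a black box. So there is nothing in the present paper to compare your argument against. That said, your outline is close in spirit to Reading's original argument: one shows that the set of chambers lying ``above'' a given shard has a unique minimum, which is then the desired join-irreducible, and the polygon analysis at codimension-two intersections is indeed the engine.

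However, your key local step is stated backwards. You write that since $G$ lies on the non-$C$-side of both $H$ and $H'$, ``$H$ is not one of the outer hyperplanes $H_1, H_r$,'' and that this gives $H \notin \Sp(X)$. But $\Sp(X) = \{H_2,\dots,H_{r-1}\}$ consists precisely of the \emph{non}-outer hyperplanes, so $H \notin \Sp(X)$ is equivalent to $H$ \emph{being} one of the outer hyperplanes $H_1, H_r$. The correct reasoning runs the other way: by simpliciality the two lower facets of $G$ (in $H$ and $H'$) meet in a codimension-two face spanning $X$, so in the rank-two subarrangement at $X$ the chamber $G$ is the one opposite $C$; that chamber is bounded exactly by $H_1$ and $H_r$, whence $\{H,H'\} = \{H_1,H_r\}$ and therefore $H \notin \Sp(X)$. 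Your desired conclusion is right, but the justification as written is inverted, and as stated would not prove what you need.

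You should also make explicit the role of simpliciality (or at least of $P(\H,C)$ being a semidistributive lattice): it guarantees that the two lower facets of $G$ actually meet in a codimension-two face, which is what makes the local rank-two polygon available. Without this, two walls of a chamber need not be adjacent, and the polygon argument does not get off the ground. Finally, your sketch of why a join-irreducible $G$ mapping to $\sigma$ must equal the minimum $G(\sigma)$ (``we would build a second lower cover of $G$ via a polygon argument'') is gestural; filling it in requires essentially the same local analysis again, applied to a cover along a maximal chain from $G(\sigma)$ up to $G$.
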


Further, we have the following theorem (closely related to statements in
\cite{Re1,Re2}, but expressed in a way that is convenient for us):

\begin{theorem}\label{rephrase}
  The map sending $G$ to $\Sh(G\gtrdot G_*)$, sends the label $j(E\gtrdot F)$ to
  the label $\Sh(E\gtrdot F)$ for any $E\gtrdot F$.
\end{theorem}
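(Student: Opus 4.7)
Set $j = j(E \gtrdot F)$, so by definition $j$ is the minimum lattice element with $F \vee j = E$, and in particular $j$ is join-irreducible. The plan is to establish $\Sh(E \gtrdot F) = \Sh(j \gtrdot j_*)$ in two steps: first identify a common separating hyperplane $H$, then show that the two facets lie in the same shard of $H$.

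For the first step, I would use the inversion-set description of $P(\H, C)$, writing $\operatorname{sep}(C,X)$ for the set of hyperplanes separating $X$ from $C$, so that $X \leq Y$ iff $\operatorname{sep}(C,X) \subseteq \operatorname{sep}(C,Y)$ and a cover relation adjoins exactly one hyperplane. Minimality of $j$ combined with the fact that $E$ covers $F$ gives $F \vee j_* = F$ (since $F \vee j_*$ lies between $F$ and $E$ but cannot equal $E$), hence $j_* \leq F$; join-irreducibility of $j$ then forces $F \wedge j = j_*$. From $j \leq E$, $j \not\leq F$, and $j_* \leq F$, the unique hyperplane $H \in \operatorname{sep}(C,E)\setminus\operatorname{sep}(C,F)$ must also be the unique element of $\operatorname{sep}(C,j) \setminus \operatorname{sep}(C,j_*)$, so $H$ is the common separating hyperplane.

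For the second step, which is the main obstacle, the plan is to exhibit a path inside $H$ connecting the facets $\overline{j} \cap \overline{j_*}$ and $\overline{E} \cap \overline{F}$ that avoids every codimension-two intersection $X \in \Ht$ with $H \in \Sp(X)$. Concretely, I would pick a maximal chain $j_* = F_0 \lessdot F_1 \lessdot \cdots \lessdot F_k = F$ and form the parallel family $E_i := F_i \vee j$, noting $E_0 = j$ and $E_k = E$. Semidistributivity together with minimality of $j$ should ensure, perhaps after deleting repetitions, that each $E_i$ covers $F_i$ and that each square $(F_{i-1}, F_i, E_i, E_{i-1})$ corresponds to a codimension-two intersection $X_i \in \Ht$ satisfying $H \notin \Sp(X_i)$. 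If instead $H \in \Sp(X_i)$, the cyclic enumeration of hyperplanes through $X_i$ (as in Figure \ref{f-five}) would produce a chamber strictly below $j$ whose associated join-irreducible still joins with $F$ to give $E$, contradicting minimality of $j$. The facets of consecutive rungs share a boundary inside $H$ lying in a single component of $H \setminus \bigcup \{X : H \in \Sp(X)\}$, so all facets along the ladder belong to one shard, and in particular $\Sh(E \gtrdot F) = \Sh(j \gtrdot j_*)$.

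The principal difficulty is making the minimality-forcing step precise: at each square one must extract a smaller candidate join-irreducible from the cyclic arrangement around $X_i$ and verify that its join with $F$ equals $E$. This is where simpliciality of $\H$ genuinely enters, via the concrete geometric structure around codimension-two intersections described in Figure \ref{f-five} and the definition of $\Sp(X)$.
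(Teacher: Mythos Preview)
Your route is genuinely different from the paper's, and considerably longer. The paper argues in the \emph{opposite} direction: rather than starting from $j(E\gtrdot F)$ and locating its shard, it starts from the shard. Let $G$ be the join-irreducible already matched to $\Sh(E\gtrdot F)$ by the preceding bijection theorem. Citing \cite[Lemma~3.5]{Re1}, the chamber $E$ lies above that shard, so $G\leq E$; since $G\not\leq F$ and $F\lessdot E$, this forces $G\vee F=E$. Any $G'<G$ satisfies $G'\leq G_*$, hence lies on the $C$-side of the separating hyperplane $H$; combined with $G'\leq E$ this gives $G'\leq F$, so $G'\vee F=F\neq E$. Thus $G$ is minimal with $G\vee F=E$, and semidistributivity upgrades ``minimal'' to ``minimum'', i.e.\ $G=j(E\gtrdot F)$. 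That is the entire proof.

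Your Step~1 (common hyperplane via separation sets) is correct and clean; in fact the observation that, for $X\leq E$, one has $X\vee F=E$ iff $H\in\operatorname{sep}(C,X)$ is exactly what makes the paper's minimality argument work. Your Step~2 is where the approaches diverge. The ladder $E_i:=F_i\vee j$ is a reasonable plan, but the assertions you flag as needing care are genuinely nontrivial: that $E_i$ covers $F_i$, that $E_{i-1}\lessdot E_i$ (or equals it), that adjacent rungs meet in a single codimension-two face $X_i$, and that $H\notin\Sp(X_i)$ there. In the poset of regions none of these follow formally from semidistributivity alone; carrying them out essentially reproves the geometric content behind \cite[Lemmas~3.4--3.6]{Re1}. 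The paper sidesteps all of this by invoking the bijection already in hand and one lemma from \cite{Re1}. What your approach would buy, if completed, is a self-contained argument not relying on those citations; what the paper's approach buys is a five-line proof.
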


\begin{proof}
  Let $G$ be the join-irreducible corresponding to the shard separating
  $E$ and $F$.  Since $E$ is above that shard, we have $G\leq E$ by
  \cite[Lemma 3.5]{Re1}.  Thus
  $G\vee F=E$.  Any element below $G$ is below the hyperplane separating
  $E$ and $F$.  Thus $G$ is a minimal element among those which join with
  $F$ to give $E$.  Since $P(\H,e)$ is semidistributive, $G$ must be the
  minimum element, and thus $G=j(E\gtrdot F)$, and we have that
  the shard associated to $j(E\gtrdot F)$ is indeed $\Sh(E\gtrdot F)$.  
\end{proof}

\section{Join-irreducibles of $W$ and bricks of $\Pi$}

In \cite{IRRT}, we constructed a bijection between join-irreducible elements
of $W$ and bricks of $\Pi$.  The simplest way to state it is as follows.
Let $e_i$ be the idempotent of $\Pi$ corresponding to the vertex $i$.
Define the two-sided ideal $I_i=\Pi(1-e_i)\Pi$.

Consider a word $\underline{w}=(i_1,\dots,i_r)$ with each
$i_j\in \{1,\dots,n\}$.  Define $I_{\underline{w}}= I_{i_1}\dots I_{i_r}$.
We say that $(i_1,\dots,i_r)$ is a reduced word for $w\in W$ if
$w=s_{i_1}\dots s_{i_r}$ and this is an expression for $w$ of the minimum
possible length.  

\begin{proposition}[\cite{IR}] If $\underline{w}_1$ and $\underline{w}_2$
  are reduced words for $w$, then $I_{\underline{w}_1}=I_{\underline{w}_2}$.
  \end{proposition}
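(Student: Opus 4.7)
The plan is to reduce the statement to a local verification using the classical fact, due to Matsumoto and Tits, that any two reduced expressions for the same element $w \in W$ can be transformed into one another by a sequence of \emph{braid moves}, without ever introducing a subword of the form $s_i s_i$. In a simply-laced Weyl group, these braid moves are of exactly two kinds: the commutation move $s_i s_j \leftrightarrow s_j s_i$ when $i$ and $j$ are not connected in $Q$, and the three-term move $s_i s_j s_i \leftrightarrow s_j s_i s_j$ when $i$ and $j$ are connected by an edge. Thus it suffices to prove the two identities of ideals
\begin{equation*}
I_i I_j = I_j I_i \qquad (i,j \text{ not adjacent in } Q),
\end{equation*}
\begin{equation*}
I_i I_j I_i = I_j I_i I_j \qquad (i,j \text{ adjacent in } Q),
\end{equation*}
since applying any such braid move to a reduced word only alters a contiguous factor in the product $I_{i_1}\cdots I_{i_r}$, and by the above identities such an alteration does not change the ideal.

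For the first identity, I would argue that when $i$ and $j$ are not adjacent there are no arrows in $\overline Q$ between them, so every element of $\Pi$ can be written as a sum of paths that, near the vertices $i$ and $j$, decouple. Concretely, I would show both $I_i I_j$ and $I_j I_i$ coincide with the ideal $\Pi(1-e_i)(1-e_j)\Pi$ plus some lower-order pieces, and that this description is manifestly symmetric in $i$ and $j$. The cleanest way is to note that $e_j I_i = I_i e_j = \Pi e_j$ (using that no arrow meets both $i$ and $j$), and to manipulate from there.

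For the second identity, I would work directly inside $e_j \Pi e_j$ or more generally inspect the image of multiplication, using the preprojective relation $\sum_{a\in Q_1}(aa^* - a^*a) = 0$ at vertex $i$. The idea is that the relation lets one rewrite a length-three path that zig-zags $j \to i \to j$ (or threads through the pair $\{i,j\}$) in a dual way, and this rewriting is exactly what converts a generator of $I_i I_j I_i$ to a generator of $I_j I_i I_j$. I expect this to reduce, after spelling out generators, to the finitely many situations that can occur around a single edge of $Q$, which can be handled by explicit case-checking.

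The main obstacle is the second (braid) identity: the first is essentially bookkeeping, but the braid identity is where the preprojective relations genuinely enter, and where one has to be careful that the ideal generated by $(1-e_i)$ on both sides is large enough to absorb the correction terms coming from $aa^* = a^*a$. Once both identities are in hand, the proposition follows immediately by induction on the number of braid moves connecting $\underline{w}_1$ to $\underline{w}_2$.
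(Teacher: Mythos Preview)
The paper does not supply its own proof of this proposition; it simply cites \cite{IR}. Your outline via Matsumoto's theorem---reduce to braid moves, then verify $I_iI_j=I_jI_i$ for non-adjacent $i,j$ and $I_iI_jI_i=I_jI_iI_j$ for adjacent $i,j$---is exactly the standard strategy, and is essentially how the result is established in \cite{IR} (see also Buan--Iyama--Reiten--Scott). So at the level of approach there is nothing to contrast.

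Two small cautions about the details you sketch. First, your observation ``$e_jI_i=I_ie_j=\Pi e_j$'' (modulo the left/right mix-up: $e_jI_i=e_j\Pi$, not $\Pi e_j$) holds for \emph{any} $j\ne i$, since $e_j=e_j(1-e_i)\in I_i$; it does not use non-adjacency, so on its own it cannot distinguish the commuting from the braid case. You will need something that genuinely sees the absence of arrows between $i$ and $j$---for instance, that $e_i\Pi e_j$ consists of paths all of which pass through a third vertex, hence already lie in $I_iI_j\cap I_jI_i$. Second, for the three-term braid relation your plan (``explicit case-checking'' using the preprojective relation at $i$) is correct in spirit but is where all the content lies; in \cite{IR} this is handled by a direct computation with the bimodule structure of $I_i$, and it is not entirely trivial. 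Your honest flagging of this as the main obstacle is appropriate.
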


  We can therefore define $I_w$ to be the ideal $I_{\underline w}$ where
  $\underline w$ is any reduced word for $w$.

  Let $w \gtrdot u$ be a cover relation in weak order.
  Following \cite{IRRT}, we define the brick label for $B(w\gtrdot u)$
  to be $I_u/I_w$.  This module turns out to be, indeed, a brick.  

  In \cite{IRRT}, we also consider the join-irreducible labelling
  $j(w\gtrdot u)$.  The definition used there is not the same as the one
  given here, but they are equivalent by \cite[Proposition 2.1]{IRRT}.



%

One of the main results of \cite{IRRT} can be stated as follows:
  
  \begin{theorem}[{\cite[Theorem 1.3]{IRRT}}] The map from join-irreducibles of $W$ to bricks
    of $\Pi$ sending $w$ to $B(w\gtrdot w_*)$ is a bijection which
    transforms the join-irreducible labelling into the brick labelling.
  \end{theorem}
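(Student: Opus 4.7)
The plan is to prove the theorem in two stages. First, establish label compatibility for all cover relations in weak order: for any cover $w \gtrdot u$ in $W$, the brick $B(w \gtrdot u)$ is isomorphic to $B(v \gtrdot v_*)$, where $v = j(w \gtrdot u)$ is the join-irreducible label. Second, deduce the bijection between join-irreducibles and bricks as a consequence of this labelling agreement.

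For label compatibility, the key input is Theorem \ref{rephrase}, which tells us that the join-irreducible label of a cover depends only on the shard separating the two chambers. It therefore suffices to prove the analogous statement on the brick side: any two cover relations lying on the same shard produce isomorphic bricks. I would handle this locally. Within a shard, any two cover relations can be connected through a sequence of \emph{commutation squares}, i.e., four-cycles in weak order of the form $u \lessdot us_i$, $u \lessdot us_j$, $us_i \lessdot us_i s_j = us_j s_i$, $us_j \lessdot us_j s_i$, where $s_i s_j = s_j s_i$. Such squares correspond to codimension-two intersections of type $A_1 \times A_1$, which (being commutation intersections, not braid intersections) do not cause their hyperplanes to be split in the shard construction. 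At each such square, the two parallel $H_i$-crossing edges $u \lessdot us_i$ and $us_j \lessdot us_j s_i$ give brick labels $I_u/I_u I_i$ and $I_u I_j / I_u I_j I_i$ respectively, and the required isomorphism between these modules follows from a direct calculation using $s_i s_j = s_j s_i$ to rearrange ideal products.

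Bijectivity then follows quickly from the labelling compatibility. For injectivity, distinct join-irreducibles correspond to distinct shards by the theorem immediately preceding Theorem \ref{rephrase}, and the brick attached to a shard is distinguished (at minimum) by the class in $K_0(\Pi)$ associated to the shard's underlying hyperplane, so distinct shards yield non-isomorphic bricks. For surjectivity, every brick arises as $B(w \gtrdot u)$ for some cover in weak order, which can be seen from the parametrization of functorially finite torsion classes of $\Pm$ by elements of $W$: every brick is the minimal non-zero object in a thin interval of the torsion-class lattice, which corresponds to a single cover relation in $W$, and the brick labels of covers are absorbed into the bijection by the labelling compatibility just established.

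The main obstacle is the commutation-square calculation underlying label compatibility, namely the isomorphism $I_u/I_u I_i \cong I_u I_j / I_u I_j I_i$ whenever $s_i s_j = s_j s_i$. This is the lone module-theoretic input that bridges the combinatorial, lattice-theoretic join-irreducible labelling on one side and the representation-theoretic brick labelling on the other, and essentially all the content of the theorem rests on this identification.
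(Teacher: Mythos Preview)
The paper does not prove this theorem; it is quoted verbatim as \cite[Theorem~1.3]{IRRT} and used as input to the proof of Theorem~\ref{main}. There is therefore no proof in the paper to compare your proposal against.

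Independently of that, your proposal has a genuine gap. Your claim that ``within a shard, any two cover relations can be connected through a sequence of commutation squares'' is false. When two adjacent facets of a shard meet along a codimension-two intersection $X$, the only conclusion from the shard construction is that the hyperplane $H$ carrying the shard is one of the two hyperplanes through $X$ adjacent to the base chamber; it does \emph{not} follow that $X$ is of type $A_1\times A_1$. Already in the simply-laced setting one meets $A_2$-intersections $X$ at which $H$ is an outer hyperplane; the two facets of the shard on either side of $X$ then correspond to covers $u\lessdot us_i$ and $us_js_i\lessdot us_js_is_j$ on opposite edges of a hexagon, and the required isomorphism $I_u/I_uI_i\cong I_uI_jI_i/I_uI_jI_iI_j$ is not the commutation calculation you describe. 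Your ``lone module-theoretic input'' is therefore not the only one: you also need a braid-type compatibility, and this is where the actual representation-theoretic work in \cite{IRRT} lies.

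Your injectivity argument is also incomplete: the class in $K_0(\Pi)$ only records the underlying hyperplane, not the shard, so two distinct shards on the same hyperplane are not separated by your stated invariant. You would need an additional argument (for instance via the torsion-theoretic description you allude to for surjectivity) to distinguish the corresponding bricks.
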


  We can now state the main theorem of this note:

  \begin{theorem}\label{main}
    For $w$ a join-irreducible of $W$, the region where the
    brick $B(w\gtrdot w_*)$ is semistable is the closure of the shard
    $\Sh(w\gtrdot w_*)$.
  \end{theorem}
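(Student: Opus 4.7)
The plan is to establish equality of two polyhedral cones inside a common ambient hyperplane, by proving both inclusions. First, I would verify that $[B(w\gtrdot w_*)]^\perp$ is precisely the hyperplane containing $\Sh(w\gtrdot w_*)$. Under the bijection of \cite{IRRT}, the class $[B(w\gtrdot w_*)]$ is the positive root associated to the reflection separating the adjacent chambers $w$ and $w_*$; its annihilator is therefore the reflecting hyperplane supporting the shard that labels the cover $w\gtrdot w_*$, by Theorem \ref{rephrase}. This reduces the theorem to comparing two cones inside a single hyperplane.

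Next, for the inclusion $\overline{\Sh(w\gtrdot w_*)}\subseteq D$, where $D$ denotes the region where $B:=B(w\gtrdot w_*)$ is semistable, I would pick $\phi$ in the open shard and bound $\phi([N])$ for each proper submodule $N\subseteq B$. Since any submodule is filtered by bricks, it suffices to handle sub-bricks $M$ individually. The key combinatorial input is that such sub-bricks correspond via \cite{IRRT} to join-irreducibles $v<w$ in weak order whose supporting hyperplanes $[M]^\perp$ meet $[B]^\perp$ in codimension-two intersections $X\in\Ht$ which split $[B]^\perp$ (namely $[B]^\perp\in\Sp(X)$); the shard-defining inequality at each such $X$, read off from the rank-two local picture around $X$ and the position of $w_*$ there, is exactly the condition $\phi([M])\leq 0$.

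For the reverse inclusion $D\subseteq\overline{\Sh(w\gtrdot w_*)}$, I would argue by contrapositive. If $\phi\in[B]^\perp$ lies strictly outside the closed shard, there is some $X\in\Ht$ with $[B]^\perp\in\Sp(X)$ such that $\phi$ lies on the wrong side of some hyperplane $H_j$ through $X$ (with $2\leq j\leq r-1$ in Reading's cyclic numbering around $X$). Working in the rank-two subarrangement through $X$, I would produce a sub-brick $M$ of $B$ whose class is the root of $H_j$ and for which $\phi([M])>0$, thereby destroying semistability of $B$.

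The main obstacle is the dictionary underlying both inclusions: one must determine exactly which bricks arise as sub-bricks of $B=I_{w_*}/I_w$ and verify that they correspond bijectively to the codimension-two intersections $X\in\Ht$ bounding $\Sh(w\gtrdot w_*)$. The natural route is to combine the ideal-filtration description of the brick from \cite{IRRT} with the canonical-join-representation viewpoint on the semidistributive lattice $W$, and to match these against Reading's rank-two analysis around each splitting $X$. Reconciling these three descriptions at every such $X$, with the correct orientation of each inequality, is the technical core of the argument.
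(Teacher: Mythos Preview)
Your treatment of the inclusion $D\subseteq\overline{\Sh(w\gtrdot w_*)}$ (instability outside the shard) matches the paper's: at a boundary facet $X$ of the shard one uses the rank-two picture and the short exact sequence $0\to E\to B\to F\to 0$ supplied by \cite[Proposition~4.3]{IRRT} to see that $E$ destabilizes $B$ once $\phi$ crosses $X$.

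For the other inclusion your strategy diverges from the paper's and carries a genuine gap. You want to verify $\phi([M])\le 0$ on the shard for every sub-brick $M\subseteq B$, and you assert that such sub-bricks correspond bijectively to the codimension-two intersections bounding the shard, so that the sub-brick inequalities coincide with Reading's shard-defining inequalities. You rightly flag this dictionary as the ``technical core'', but you do not prove it, and in the stated form it is doubtful: nothing forces $B$ to have only as many sub-bricks as the shard has facets, and for any surplus sub-brick you have no mechanism to control $\phi([M])$ on $\Sh$. (Your reduction from arbitrary submodules to sub-bricks, on the other hand, is fine: a minimal-dimension destabilizing submodule is necessarily a brick.)

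The paper bypasses any classification of sub-bricks. It proves, by \emph{reverse} induction on $\ell(w)$, that for \emph{every} cover $w\gtrdot u$ (not only join-irreducibles) the brick $B(w\gtrdot u)$ is semistable on the corresponding closed facet of the Coxeter fan; since the shard is tiled by such facets (those carrying the given brick label, by Theorem~\ref{rephrase} together with \cite[Theorem~1.3]{IRRT}), this yields semistability on the whole closed shard. The base case is $w=w_0$, where the bricks are the simples. For the inductive step one picks $s_j$ with $\ell(s_jw)>\ell(w)$ and applies the functor $I_j\otimes(-)$: Lemmas~\ref{reflects}, \ref{moving-up}, and \ref{defect} show that a hypothetical destabilizing submodule $E\subseteq B$ at $\phi$ transports, via $I_j\otimes(-)$ followed by taking the image in $B'=I_j\otimes B$, to a destabilizing submodule of $B'$ at $s_j\phi$, contradicting the inductive hypothesis for the longer element $s_jw$. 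This moves the problem along weak order rather than dissecting $B$ internally, and never requires knowing which bricks embed in $B$.
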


\section{Technical Lemmas}
  
  Before we begin the proof of the main theorem, we need a few technical lemmas.

  \begin{lemma}\label{reflects} Let $M$ be a $\Pi$-module such that
    $\Hom(S_i,M)=0$.  Then $[I_i\otimes M]=s_i([M])$.
    \end{lemma}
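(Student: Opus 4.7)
The plan is to begin from the short exact sequence of $\Pi$-bimodules
$$0 \to I_i \to \Pi \to S_i \to 0,$$
in which $S_i = \Pi/I_i$ is the simple bimodule concentrated at vertex $i$.  Applying $- \otimes_\Pi M$ and using that $\Pi$ is projective as a right $\Pi$-module yields the long exact sequence
$$0 \to \Tor_1^\Pi(S_i, M) \to I_i \otimes_\Pi M \to M \to S_i \otimes_\Pi M \to 0.$$
Taking classes in $K_0(\Pi)$ gives $[I_i \otimes_\Pi M] = [M] + [\Tor_1^\Pi(S_i, M)] - [S_i \otimes_\Pi M]$.  The left $\Pi$-action on the two outer terms factors through $\Pi \to \Pi/I_i = S_i$, so each is a direct sum of copies of $S_i$, and the problem reduces to the numerical identity
$$\dim(S_i \otimes_\Pi M) - \dim \Tor_1^\Pi(S_i, M) = \la [S_i], [M] \ra.$$

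To compute this I would invoke the beginning of the minimal projective resolution of $S_i$ as a right $\Pi$-module,
$$e_i\Pi \xrightarrow{d_2} \bigoplus_{j \sim i} e_j\Pi \xrightarrow{d_1} e_i\Pi \to S_i \to 0,$$
in which $d_1$ is left multiplication by the arrows $j \to i$ and $d_2$ is left multiplication by the arrows $i \to j$ in $\overline{Q}$.  That the projective at step $2$ is again $e_i\Pi$ reflects the fact that the stable module category of $\Pi$ is $2$-Calabi--Yau, so that $\Omega^2 S_i \cong S_i$.  Tensoring this partial resolution with $M$ produces the complex
$$e_i M \xrightarrow{\bar d_2} \bigoplus_{j \sim i} e_j M \xrightarrow{\bar d_1} e_i M,$$
whose $H_0$ and $H_1$ are $S_i \otimes_\Pi M$ and $\Tor_1^\Pi(S_i, M)$, respectively.

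The key observation is that $\bar d_2$ sends $m \in e_i M$ to $(a \cdot m)_a$ as $a$ ranges over the arrows of $\overline{Q}$ out of $i$, so that
$$\ker \bar d_2 = \{m \in e_i M \mid a \cdot m = 0 \text{ for every arrow } a \text{ out of } i\} = \Hom_\Pi(S_i, M).$$
The hypothesis therefore forces $\bar d_2$ to be injective, so $\operatorname{rank} \bar d_2 = \dim e_i M$, and a direct Euler-characteristic computation on the above three-term complex gives
$$\dim(S_i \otimes_\Pi M) - \dim \Tor_1^\Pi(S_i, M) = 2\dim e_i M - \sum_{j \sim i} \dim e_j M = \la [S_i], [M] \ra.$$
Substituting back delivers $[I_i \otimes_\Pi M] = [M] - \la [S_i], [M] \ra [S_i] = s_i([M])$.

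The main obstacle, and the one place where genuine input about preprojective algebras enters, is pinning down the initial two differentials of the minimal resolution of $S_i$---in particular the identification of $\ker \bar d_2$ with $\Hom(S_i, M)$.  This step relies on the mesh relations at vertex $i$ and on the $2$-Calabi--Yau property of the stable module category in the Dynkin setting, and is not purely formal.
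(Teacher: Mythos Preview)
Your argument is correct, and it takes a genuinely different route from the paper's own proof.

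The paper's proof is a two-line appeal to outside results: under the hypothesis $\Hom(S_i,M)=0$, the module $I_i\otimes M$ is identified with the image of $M$ under a spherical twist functor on the derived category of the affine preprojective algebra $\hat\Pi$ (citing \cite[Proposition~3.2(b)]{IRRT}), and spherical twists are known to act by the reflection $s_i$ on the Grothendieck group (citing \cite[Lemma~2.6]{AIRT}). So the paper treats the lemma as a corollary of derived-categorical machinery developed elsewhere.

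Your proof, by contrast, is a direct and elementary Euler-characteristic computation. You use only the short exact sequence $0\to I_i\to\Pi\to S_i\to 0$, the explicit first three terms of the minimal projective resolution of $S_i$ as a right $\Pi$-module, and the identification $\ker\bar d_2=\Hom_\Pi(S_i,M)$. This last identification is the one nontrivial input, and you are right that it comes down to the specific shape of the mesh relation at vertex~$i$: since that relation involves each incident arrow exactly once, the map $d_2$ is built from all the arrows at $i$, so its kernel after tensoring picks out the $S_i$-socle of $M$. Your invocation of the $2$-Calabi--Yau property is slightly more than you need here; the explicit form of $d_2$ already suffices, and your argument goes through in the Dynkin case without ever computing $\Tor_2$.

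What each approach buys: the paper's proof is conceptually uniform (spherical twists explain both the module-level isomorphism and the $K_0$ action at once) but relies on external references and a passage through $\hat\Pi$. Your proof is self-contained, stays entirely within $\Pi$-modules, and makes transparent exactly where the hypothesis $\Hom(S_i,M)=0$ enters---namely, to force $\bar d_2$ injective so that the three-term Euler characteristic collapses to the desired bilinear-form expression.
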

\begin{proof}
  If $\Hom(S_i,M)=0$, then $I_i\otimes M$ is isomorphic to the result of applying
  a certain spherical twist functor to $M$, where $M$ is thought of in the derived category of the corresponding affine-type preprojective algebra $\hat \Pi$.  (This part of
  the conclusion of \cite[Proposition 3.2(b)]{IRRT} follows if we assume
  only that $\Hom(S_i,M)=0$, although there, an additional homological
  assumption on $M$ is made.)
  Spherical twists act like reflections on the level of the
  Grothendieck group.  (See for example \cite[Lemma 2.6]{AIRT}.)
  \end{proof}
  
  \begin{lemma}\label{moving-up} Let $w\gtrdot u$ be a cover in weak order on $W$.  Let
    $i$ be such that $\ell(s_iw)>\ell(w)$.  Then $\Hom(S_i,B(w\gtrdot u))=0$.
\end{lemma}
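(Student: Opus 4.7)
The plan is to reduce the lemma to a stronger statement about the socle of $\Pi/I_w$, and then establish that statement by induction.

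For the reduction, since $I_u$ is a left ideal of $\Pi$, the inclusion $I_u\hookrightarrow \Pi$ descends to an inclusion $B(w\gtrdot u) = I_u/I_w \hookrightarrow \Pi/I_w$. Left-exactness of $\Hom(S_i,-)$ then gives an injection $\Hom(S_i, B(w\gtrdot u))\hookrightarrow \Hom(S_i, \Pi/I_w)$, so it suffices to prove the auxiliary statement: $\Hom(S_i, \Pi/I_w) = 0$ whenever $\ell(s_iw) > \ell(w)$.

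For the auxiliary statement, I would induct on $\ell(w)$. The base case $\ell(w)=0$ is immediate since $\Pi/I_w = 0$. For the step, pick a left descent $s_k$ of $w$; since $\ell(s_iw) > \ell(w) > \ell(s_kw)$, we have $k\ne i$. Writing $w = s_k w'$ with $\ell(w') = \ell(w)-1$, the multiplicativity of the ideals (from \cite{IR}) gives $I_w = I_k I_{w'}$. The key observation is that the quotient $I_{w'}/I_kI_{w'}$ is annihilated on the left by $I_k$, hence is a module over $\Pi/I_k \cong S_k$, and is therefore a direct sum of copies of $S_k$. In particular, since $i\ne k$, we have $\Hom(S_i, I_{w'}/I_kI_{w'}) = 0$. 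Applying $\Hom(S_i,-)$ to the short exact sequence
\[
0 \to I_{w'}/I_kI_{w'} \to \Pi/I_w \to \Pi/I_{w'} \to 0
\]
yields an injection $\Hom(S_i, \Pi/I_w) \hookrightarrow \Hom(S_i, \Pi/I_{w'})$.

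The main obstacle is that the inductive hypothesis does not apply directly to $w'$: the condition $\ell(s_iw) > \ell(w)$ need not descend to $\ell(s_iw') > \ell(w')$, as one sees already for $w = s_1s_2$ in type $A_2$ with $k=1$ and $i=2$. To bridge this gap, I would iterate the reduction along a chain of left multiplications by simple reflections distinct from $s_i$, aiming to reach the identity (where $\Pi/I_e = 0$); the existence of such a chain is a combinatorial property of weak order, following from the fact that $s_i$ is never a left descent of any element through which the chain passes. Alternatively, one can combine the left-descent injection above with a parallel right-descent analysis, using the connecting homomorphism in the long exact sequence of $\Ext$ to handle the cases where the naive induction fails. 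The technical heart of the proof is analyzing when this connecting homomorphism is injective, which amounts to showing that any potential $S_i$-socle element of $\Pi/I_{w'}$ genuinely fails to lift through the quotient $\Pi/I_w \twoheadrightarrow \Pi/I_{w'}$.
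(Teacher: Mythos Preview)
Your reduction to the auxiliary claim $\Hom(S_i,\Pi/I_w)=0$ is sound, and that claim is indeed true.  The problem is that neither of your proposed arguments for it actually goes through.

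For the chain argument: the chain you want simply need not exist.  Your own example already shows this.  Take $w=s_1s_2$ in type $A_2$ with $i=2$.  The unique left descent of $w$ is $s_1$, so you pass to $w'=s_2$.  But now the \emph{only} left descent of $w'$ is $s_2=s_i$, so you cannot take another step without using $s_i$.  In general, a chain from $w$ down to $e$ using only left multiplications by simple reflections $\ne s_i$ exists if and only if $w$ lies in the parabolic subgroup $\langle s_j : j\ne i\rangle$, which is certainly not implied by $\ell(s_iw)>\ell(w)$.  Your assertion that ``$s_i$ is never a left descent of any element through which the chain passes'' is false: the hypothesis $\ell(s_iw)>\ell(w)$ does not propagate down the chain.

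For the $\Ext$ argument: asking that the connecting map $\Hom(S_i,\Pi/I_{w'})\to\Ext^1(S_i,I_{w'}/I_w)$ be injective is, given that $\Hom(S_i,I_{w'}/I_w)=0$, exactly equivalent to the statement $\Hom(S_i,\Pi/I_w)=0$ that you are trying to prove.  Your reformulation (``any potential $S_i$-socle element of $\Pi/I_{w'}$ fails to lift'') is just a restatement of the goal, not a strategy for proving it.

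The paper's argument avoids induction entirely.  It first observes that $\ell(s_iw)>\ell(w)$ forces $\ell(s_iu)>\ell(u)$ (an easy length argument, since $w=us_j$ covers $u$).  Then it invokes the torsion pair $(T_u,F_u)$ with $T_u=\Fac I_u$: from $\ell(s_iu)>\ell(u)$ one gets $I_{s_iu}\subsetneq I_u$, so $S_i$ is in the top of $I_u$ and hence $S_i\in T_u$; on the other hand $B(w\gtrdot u)\in F_u$ by \cite[Theorem~4.5]{IRRT}.  The vanishing $\Hom(S_i,B(w\gtrdot u))=0$ is then immediate from the torsion pair axioms.  This is a two-line conceptual argument once the torsion-theoretic machinery from \cite{IRRT} is in place, whereas your approach, even if completed, would require a separate structural analysis of the socle of $\Pi/I_w$.
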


  \begin{proof} The Weyl group element $u$ determines a torsion class
    $T_u=\Fac I_u$, and a corresponding torsion-free class $F_u$.  Because
    $\ell(s_iu)>\ell(u)$, $I_{s_iu}$ is properly contained in $I_u$, and thus
    $S_i$ is in the top of $I_u$, and in particular, $S_i\in T_u$.  On the
    other hand, by \cite[Theorem 4.5]{IRRT}, $B(w\gtrdot u) \in F_u$.
    Thus $\Hom(S_i,B(w\gtrdot u))=0$.  
    \end{proof}
  
  We remark that under the hypotheses of Lemma \ref{moving-up},
  since $B(s_iw \gtrdot s_iu)\cong I_i\otimes B(w\gtrdot u)$,
  what Lemma \ref{moving-up} says is that Lemma \ref{reflects} applies,
  so that
    $[B(s_iw \gtrdot s_iu)]=s_i([B(w\gtrdot u)])$.  This is
    part of what is
    implied by Theorem \ref{main}; see also \cite[Theorem 2.7(1)]{AIRT}.  

  \begin{lemma}\label{defect} Let $N$ be a submodule of $M$, and suppose that
    $\Hom(S_i,M)=0$.  Then the kernel of the induced map from $I_i\otimes N$ to
    $I_i\otimes M$ is a sum of some number (possibly
    zero) of copies of $S_i$.
  \end{lemma}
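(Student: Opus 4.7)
The plan is to bound the kernel by $\Tor_1(S_i, N)$ (with $S_i$ viewed as the $\Pi$-bimodule $\Pi/I_i$) and then to observe that this Tor is automatically a direct sum of copies of $S_i$.

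For the first step, I would note that the square
$$\begin{array}{ccc}
I_i \otimes N & \longrightarrow & I_i \otimes M \\
\mu_N \downarrow & & \downarrow \mu_M \\
N & \hookrightarrow & M
\end{array}$$
commutes, both compositions sending $x \otimes n$ to $xn$. Injectivity of $N \hookrightarrow M$ then forces the kernel of the top map into $\ker \mu_N$. Applying $- \otimes_\Pi N$ to the bimodule short exact sequence $0 \to I_i \to \Pi \to S_i \to 0$ and using flatness of $\Pi$ over itself, one identifies $\ker \mu_N$ with $\Tor_1(S_i, N)$, naturally a left $\Pi$-module through the bimodule structure on $S_i$.

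For the second step, take a projective resolution $Q_\bullet \to N$ by left $\Pi$-modules, so that $\Tor_1(S_i, N) = H_1(S_i \otimes_\Pi Q_\bullet)$. Each $Q_k$ is a direct sum of indecomposable projectives $P_j = \Pi e_j$, and one checks that $S_i \otimes_\Pi P_j \cong (\Pi/I_i) \cdot e_j$, which is $0$ for $j \neq i$ (since $e_j \in I_i$) and a single copy of the simple $S_i$ for $j = i$ (since $e_i \notin I_i$, and the surviving left $\Pi$-action is that of $S_i$). Thus $S_i \otimes_\Pi Q_\bullet$ is a chain complex of direct sums of $S_i$, so its homology in any degree is a subquotient of such a semisimple module and hence itself a direct sum of copies of $S_i$. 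Any submodule of such a module is again of that form, so the kernel in the statement of the lemma has the claimed shape.

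The main obstacle, such as it is, lies in recognising that $\Tor_1(S_i, N)$ inherits a natural left $\Pi$-module structure from the bimodule structure on $S_i=\Pi/I_i$, making it semisimple of isotype $S_i$; once this is seen, the proof is routine. Notice that the hypothesis $\Hom(S_i, M) = 0$ plays no explicit role above, and is likely included to match the setting of Lemma~\ref{reflects} and the spherical-twist interpretation of $I_i \otimes -$ that governs the surrounding discussion.
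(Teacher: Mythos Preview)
Your argument is correct, and the observation that the hypothesis $\Hom(S_i,M)=0$ is not actually used is accurate (the paper's own proof does not invoke it either).

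The route you take differs from the paper's. The paper applies $I_i\otimes-$ to the short exact sequence $0\to N\to M\to M/N\to 0$ and reads off from the long exact Tor sequence that the kernel in question is a quotient of $\Tor_1(I_i,M/N)$; a dimension shift along $0\to I_i\to\Pi\to S_i\to 0$ then identifies this with $\Tor_2(S_i,M/N)$, which is a sum of copies of $S_i$ for the same reason you give. You instead use the commuting square with the multiplication maps $\mu_N,\mu_M$ to embed the kernel into $\ker\mu_N\cong\Tor_1(S_i,N)$. So the paper bounds the kernel from above by (a quotient of) $\Tor_2(S_i,M/N)$, while you bound it by (a submodule of) $\Tor_1(S_i,N)$; both bounds are semisimple of type $S_i$ by the projective-resolution argument you spell out. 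Your approach is a touch more elementary in that it avoids the dimension shift and never mentions $M/N$, while the paper's approach stays closer to the standard long exact sequence machinery; either way the core content is that $\Tor_\ast^\Pi(S_i,-)$ lands in sums of $S_i$ because $S_i\otimes_\Pi P_j$ vanishes for $j\neq i$.
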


  \begin{proof} From the short exact sequence
    $$ 0 \rightarrow N \rightarrow M \rightarrow M/N \rightarrow 0$$
    we obtain
    $$ \Tor_1(I_i,M/N) \rightarrow I_i\otimes N \rightarrow I_i \otimes M
    \rightarrow I_i\otimes M/N \rightarrow 0$$
    To evaluate $\Tor_1(I_1,M/N)$, we can take
    $$0 \rightarrow I_i \rightarrow \Pi \rightarrow S_i \rightarrow 0$$
    and tensor by $M/N$, obtaining that $\Tor_1(I_i,M/N) \cong \Tor_2(S_i,M/N)$.
    As a $\Pi$-module, $\Tor_2(S_i,M/N)$ is congruent to a sum of some number
    of copies of $S_i$.
  \end{proof}

\section{Proof of Main Theorem}
  
  \begin{proof}[Proof of Theorem \ref{main}]
    Let $w=us_i \gtrdot u$ be a cover in weak order on $W$.  We will prove by
    reverse induction on the length of $w$ that $B(w\gtrdot u)$ is semistable
    on the (closed) facet of the Coxeter fan corresponding to $w\gtrdot u$.

    There is a unique element of $W$ of maximal length, usually denoted
    $w_0$, and the chamber
    corresponding to it is $-C$.  The hyperplanes that bound it are
    perpendicular to the simple roots, and the modules corresponding to the
    covers are the simple modules, each of which is semistable on its
    entire perpendicular hyperplane.  This establishes the base case of the
    induction.

    Now suppose that $w<w_0$.  Let $s_j$ be a simple reflection such that
    $\ell( s_jw)>\ell(w)$.  Let $w'=s_jw$, $u'=s_ju$, $B'=B(w'\gtrdot u')$.
    $B'$ is related to $B$ by $B'=I_j \otimes B$.  By Lemma \ref{moving-up}
    and Lemma \ref{reflects}, we have that
    $[B']=s_i([B])$.  
    
    Suppose that $B$ is not semistable for some $\phi$ in the facet
    corresonding to $w\gtrdot u$.
        This must be because of some subobject
        $E$ of $B$ such that $\phi([E])>0$.
Define $\phi'=s_i(\phi)$.  It falls on the
    facet $s_jw\gtrdot s_ju$.  
        We want to conclude that there is
        a corresponding destabilizing subobject of $B'$ for $\phi'$,
        which would contradict
    our induction hypothesis.  

    By Lemma \ref{moving-up}, $\Hom(S_i,B)=0$.  It therefore follows that
    $\Hom(S_i,E)=0$, so we can apply Lemma \ref{reflects} to conclude that
    $[I_i\otimes E]=s_i([E])$.  Therefore $\phi'([I_i\otimes E])=\phi([E])>0$.
    Let $E'$ be the image of $I_i\otimes E$ in
    $B'$.  
    The kernel of the natural map from $I_i \otimes E$ to $E'$
    is a sum of copies of $S_i$ by
    Lemma \ref{defect}.  Since $\ell(s_iu)>\ell(u)$, the chamber of $u$ lies
    on the opposite side from $C$ of the hyperplane perpendicular to $[S_i]$.
    Thus, $\phi'([S_i])\leq0$, so $\phi'([E'])\geq\phi'([I_i\otimes E])>0$.
    It follows that $E'$ is destabilizing for $B'$ with respect to
    $\phi'$, which is contrary to our induction hypothesis.  Therefore
    $B(w\gtrdot u)$ is semistable with respect to weights on the facet
    corresponding to $w\gtrdot u$, as desired.  

    Now we prove the opposite direction, namely, that a brick must be
    unstable outside the closure of the corresponding shard.  
    Let $w$ be a join-irreducible of $W$.  Let
    $\Sh=\Sh(w\gtrdot w_*)$, $B=B(w\gtrdot w_*)$.  Consider a facet $X$ of
    $\Sh$.  By the construction of shards, the span of $X$ is a
    codimension two intersection in $\Ht$, and around $X$ we have a picture
    with four shards, as shown in Figure \ref{f-six}. As
    always, the base chamber $C$ is at the bottom.  

    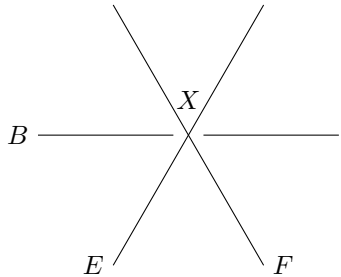
\begin{figure}[h]
$$\begin{tikzpicture}[scale=2]
  \draw [rotate=0] (0.1,0) -- (1,0);
  \draw [rotate=60] (0,0)--(1,0); 
  \draw [rotate=120] (0,0)--(1,0);
  \draw [rotate=180] (0.1,0)--(1,0) node[left]{$B$};
  \draw [rotate=240] (0,0)--(1,0) node[left]{$E$};
  \draw[rotate=300] (0,0)--(1,0) node[right]{$F$};
  \draw (0,0.11) node[above]{$X$};
      \end{tikzpicture}$$
      \caption{\label{f-six}The local picture around $X$}
\end{figure}

    By \cite[Proposition 4.3]{IRRT}, if $E$ and $F$ are the bricks
    associated to the shards
    as in the picture, then there is a short exact sequence:
    $$0 \rightarrow E \rightarrow B \rightarrow F \rightarrow 0. $$

    For $\phi \in C$, we have that $\phi([S_i]) >0$ for all
    $i$.  Let $\theta \in \Sh$.  Since $\Sh$ is on the opposite side of
    $[E]^\perp$ from $E$, $\theta([E])\leq 0$.  This is consistent with
    the fact which we have already established that $B$ is $\theta$-semistable.
    However, if $\theta$ is strictly on the opposite side of the hyperplane in
    $[B]^\perp$ defined by the span of $X$, then $\theta([E])> 0$,
    so $B$ is not $\theta$-semistable.  This establishes the theorem.
  \end{proof}

  


  \section{Connection to other work}
  Baumann, Kamnitzer, and Tingley \cite{BKT} study the representation theory
  of preprojective algebras of affine type.  Many of the ideas from this note
  could also be extracted from their work, but they do not discuss shards, so
  the combinatorics we present here is less explicitly developed.  
  
  Crawley-Boevey establishes a result about the existence of representations
  of deformed preprojective algebras \cite[Theorem 1.2]{CB-geom} which
  implies 
  that $(\Pm)^\phi \ne 0$ iff $\phi$ lies on a
  reflecting hyperplane by \cite[Lemma 3]{CB-exc}.  However, the argument to
  pass from the deformed preprojective algebra to semistable representations
  of the usual preprojective algebras depends on an assumption that the ground
  field is the complex numbers.  
  
  \section*{Acknowledgements}
  I would like to thank David Speyer, who gave me the suggestion that there should be a connection between shards and stability.  Discussions with him were crucial to the development of my understanding of this topic.  The work presented here is part of an ongoing larger joint project.  I would also like to thank
  Osamu Iyama, Nathan Reading, and Idun Reiten; this paper also draws greatly
  on things I learned from our collaboration.  I am grateful to the organizing
  committee of ICRA 2016 for having invited me to give the talk the details
  of which are fleshed out here, and to the referee for his or her comments.
  I would also like to record my gratitude to
  the Université Paris VII for the excellent working conditions in which I
  finished writing up this paper.  I gratefully acknowledge financial support from
  NSERC and the Canada Research Chairs program.

\end{document}